\newcommand{\naturals}{\mathbb{N}}
\newcommand{\reals}{\mathbb{R}}
\newcommand{\indicator}{\mathbf{1}}
\newcommand{\pr}{\mathbb{P}}
\newcommand{\ex}{\mathbb{E}}
\newcommand{\borel}{\mathcal{B}}
\newcommand{\Ea}{[\mathbf{E1}]}
\newcommand{\Eb}{[\mathbf{E2}]}
\newcommand{\Ec}{[\mathbf{E3}]}
\newcommand{\Ca}{[\mathbf{C1}]}
\newcommand{\Cb}{[\mathbf{C2}]}
\newcommand{\Cc}{[\mathbf{C3}]}
\newcommand{\vect}[1]{\boldsymbol{#1}}
\newcounter{lemmacount} \setcounter{lemmacount}{0}
\newcounter{corcount} \setcounter{corcount}{0}
\newcounter{propcount} \setcounter{propcount}{0}
\newcounter{examples} \setcounter{examples}{0}
\newcounter{remarks} \setcounter{remarks}{0}
\newtheorem{theorem}{Theorem}
\newtheorem{lemma}[lemmacount]{Lemma}
\newenvironment{remark}[1][]{\refstepcounter{remarks}\medskip\noindent\textbf{Remark \theremarks.$\;$}}{\medskip}
\DeclareMathOperator*{\argmax}{arg\,max}
\begin{document}
{\small

\title{On explicit form of the stationary distributions for a class of bounded Markov chains}

\author{S.\ McKinlay\footnote{
Department of Mathematics and Statistics, University of Melbourne, Parkville 3010, Australia. E-mail: s.mckinlay@ms.unimelb.edu.au.}
\ and \ K.\ Borovkov\footnote{
Department of Mathematics and Statistics, University of Melbourne, Parkville 3010, Australia. E-mail: borovkov@unimelb.edu.au.}
}

\date{}

\maketitle

\begin{abstract}
\noindent
We consider a  class of discrete time   Markov chains with state space $[0,1]$ and the following dynamics. At each time step, first the direction of the next transition is chosen at random with probability depending on the current location. Then the length of the jump is chosen independently as a random proportion of the distance to the respective end point of the unit interval, the distributions of the proportions being fixed for each of the two directions.  Chains of that kind were subjects of a number of studies and are of interest for some applications. Under simple broad conditions, we establish the ergodicity of such Markov chains and then derive closed form expressions for the stationary densities of the chains when the proportions are beta distributed with the first parameter equal to $1$. Examples demonstrating the range of stationary distributions for processes described  by this model are given, and an application to a robot coverage algorithm is discussed.
\vspace{0.2cm}

\emph{Key words and phrases:} stationary distribution, Markov chain, ergodicity, beta distribution, give-and-take model, semidegenerate kernel, random search.\vspace{0.2cm}

\emph{AMS Subject Classifications:}  primary 	60J05; secondary  60J20, 45B05.
\end{abstract}


\section{Introduction}\label{S_Intro}

This paper is mostly devoted to deriving explicit formulae for the stationary densities for a class of ergodic $[0,1]$-valued discrete time Markov chains  that appear in some interesting  applications (see e.g.\ Section $4$ in \cite{Iacus}, Section 5 in \cite{Ramli}, and Section \ref{S_ex} below). The chain dynamics are as follows. Let $F_L$ and $F_R$ be two fixed distributions on $[0,1],  $ $p: [0,1] \rightarrow [0,1]$  a fixed measurable function. Given the chain value $X_n = x\in [0,1]$ at time $n$, the next jump of the process is to the left with probability $p(x)$ or to the right with probability $1-p(x)$.  If the jump is to the left, its length is given by an independent random proportion $L_{n+1} \sim F_L$ of the length of the interval $[0,x]$. Otherwise, the chain jumps to the right for a distance given by an independent random proportion $R_{n+1} \sim F_R$ of the length of the interval $[x,1]$.

That is,   the evolution of our  Markov chain $X = \{X_n\}_{n \ge 0}$ is given by
\begin{equation}\label{RDE}
X_{n+1} = X_n - X_n L_{n+1}I_{n+1} + (1-X_n) R_{n+1} (1-I_{n+1}), \quad n = 0, 1, \ldots,
\end{equation}
where $X_0 = x_0 \in [0,1],$ $I_{n+1} := \indicator_{\{U_{n+1} < p(X_n)\}}$, $\indicator_A$ being the indicator of the event $A$, and $\{L_n\}_{n \ge 1}$, $\{R_n\}_{n \ge 1}$ and $\{U_n\}_{n \ge 1}$ are independent sequences of i.i.d.\ random variables such that  $L_n\sim F_L,$ $R_n\sim F_R,$ and $U_n \sim U[0,1]$, the uniform law on $[0,1].$

The above model was apparently first introduced in Section $2.1$ of \cite{Diaconis} for $p(x) \equiv p$ and $F_L=F_R=U[0,1]$. The case of non-constant $p(x)$ was also discussed, but not pursued in \cite{Diaconis}. Further special cases of that model for various choices of $p(x)$ and   distributions $F_L$, $F_R$ were considered in \cite{Stoyanov}, \cite{Bialkowski}, \cite{Stoyanov2}, \cite{Pacheco} and \cite{Ramli}. In particular, \cite{Pacheco} dealt with the case when $p(x) \equiv x$ and $F_L=F_R=\beta(1,z)$, $z>0$, where $\beta(a,b)$ denotes the beta distribution with density
\begin{equation}\label{betapdf}
 x^{a-1} (1-x)^{b-1}/B(a,b), \quad 0<x<1,
\end{equation}
$B(a,b)$ being the beta function, $a,b>0$. It was shown in \cite{Pacheco} that in that case $X$ was ergodic with stationary law $\beta(z,z)$. The case when $F_L=F_R=U[0,1]$  and $p(x)$ is piecewise continuous was studied in \cite{Ramli}.

In the present  paper, in the case where $F_L= \beta(1,l)$, $F_R=  \beta(1,r)$ for some $l,r>0$, we derive the stationary density for piecewise continuous $p(x)$ satisfying a natural  condition that ensures ergodicity. In particular, for $F_L=F_R=\beta(1,z)$, $z>0$, and linear $p(x) = cx + (1-b)(1-x)$, $b,c \in (0,1]$, the Markov chain $X$ is ergodic with stationary distribution $\beta(bz,cz)$ (see Section \ref{S_linear} of this paper). We find that many of the existing results on the form of the stationary density for the above model are special cases of our more general Theorem~\ref{t_main} below. We also show how the same approach can be used to compute (at least, numerically) the stationary distribution when $ F_L= \beta(l_1,l_2)$, $F_R= \beta(r_1,r_2)$, $l_1,r_1 \in \naturals$, $ l_2,r_2 >0,$ by solving a two-point boundary value problem for a system of $l_1+r_1$ ordinary differential equations.

One of the main reasons for  considering such more general models is that the class of their stationary laws is far richer than in the special case $l=r=1$. In particular, by choosing large enough  $l$ and $r$, one can obtain  unimodal and multimodal stationary densities with arbitrarily high and ``sharp'' peaks. We will use this feature to generalise the robot coverage algorithm from \cite{Ramli} (see Section \ref{S_robot} below).

The Markov chain $X$ is also a special case of the so called ``give-and-take" model that was  introduced, in its deterministic form, in the context of human genetics in \cite{Li} (see also \cite{McKinlay} for an extension of this model to higher dimensions) and then studied in \cite{DeGroot}. In that model, two players (call them players~1 and~2, resp.),   with a fixed total amount of capital (normalised to be one  for convenience), at each step exchange random amounts of their fortunes with each other. The Markov chain $X$ is a version of that model where at each step only one player exchanges her fortune with the other. At time $n \ge 0$, if $I_{n+1} =0$, player~1 (whose fortune is given by~$X_n$) receives a proportion $R_{n+1}$ of player~2's fortune (which is given by $1-X_n$), otherwise if $I_{n+1} =1$, player~2 receives a proportion $L_{n+1}$ of player~1's fortune.

In Section \ref{S_main}, we establish the ergodicity of the Markov chain $X$ under simple conditions  and derive  the form of its stationary density in the ergodic case when $F_L= \beta(1,l)$, $F_R=  \beta(1,r),$ $l,r>0$. Some examples (extending some results from  \cite{Bialkowski}, \cite{Stoyanov2}, \cite{Pacheco}, and~\cite{Ramli}) are presented in Section~\ref{S_ex}.


\section{Main Results}\label{S_main}

First we  show that the following three conditions imply the ergodicity of the Markov chain~$X$. They are by no means necessary for ergodicity, but are quite suitable for the purposes of this paper.

\begin{itemize}
\item[{$\Ea$}] For some $\delta \in (0,1/2)$,
\[
\sup_{x \in [0,\delta]} \max \{p(x), 1- p(1-x)\} =:1-\varepsilon <1.
\]

\item[{$\Eb$}]  For some $\delta \in (0,1/2)$,
\[
\max\bigl\{  F_L(1-\delta),F_R(1-\delta) \bigr\}=:1-\eta <1.
\]

\item[{$\Ec$}] There exist $\delta \in (0,1/2)$ and  $s,t\in (\delta, 1-\delta)$, $ s < t $, such that   $F_L$ and $F_R$ have densities $f_L$ and $f_R$ on the intervals $(1-t-\delta,1-s)$ and $(s-\delta,t)$, respectively. Moreover, for
\begin{equation*}
g(z) := \min \left\{ \inf_{y \in [1-\delta,1]}  f_L \biggr( \frac{y-z}{y} \biggr), \inf_{y \in [0,\delta]}    f_R \biggr( \frac{z-y}{1-y} \biggr) \right\},\quad z\in (s,t),
\end{equation*}
one has
\begin{equation*}
\gamma := \int_s^t g(z) \, dz >0.
\end{equation*}
\end{itemize}

\begin{lemma}\label{ergodic}
If the Markov chain $X$ given by \eqref{RDE} satisfies conditions $\Ea$--$\Ec$ with a common $\delta,$ then $X$ is ergodic.
\end{lemma}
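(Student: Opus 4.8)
The plan is to verify the two standard ingredients of a Harris-type ergodicity criterion for a Markov chain on the compact state space $[0,1]$: a drift (Lyapunov) condition pushing the chain away from the endpoints $\{0,1\}$, and a minorization condition on a suitable ``small'' set in the interior. Since the state space is compact, once we know the chain cannot get ``stuck'' near the endpoints and that some interior set is small and recurrent, a Doeblin-type argument yields ergodicity (uniform, in fact) with a unique stationary law.

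First I would use $\Ea$ and $\Eb$ to establish the drift condition. Take the test function $V(x) := 1/(x(1-x))$, or a symmetric variant such as $V(x)=\min\{1/x,1/(1-x)\}$ on the ``bad'' neighbourhoods $[0,\delta]\cup[1-\delta,1]$ of the endpoints; the point is that $V$ blows up precisely where we need control. Starting from $x \in [0,\delta]$, with probability at least $\varepsilon$ (by $\Ea$) the chain jumps to the right, and then by $\Eb$ with conditional probability at least $\eta$ the rightward proportion $R_{n+1}$ exceeds $1-\delta$, so the new position is at least $x + (1-x)(1-\delta) \ge 1-\delta$ — i.e.\ the chain lands in the interior, far from $0$, where $V$ is bounded. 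A leftward jump from $[0,\delta]$ can only decrease $x$, hence increase $1/x$, but it multiplies $x$ by $1-L_{n+1} \in [0,1]$, and one has to be a little careful because $\ex[1/(1-L_{n+1})]$ need not be finite for a general $F_L$. This is where I expect the main technical fuss: one either (i) works with a bounded modification of $V$ and a two-step (or $k$-step) drift inequality, or (ii) observes that the contribution of the rare leftward excursions is controlled because with the dominant probability $\varepsilon\eta$ we reset to the interior in one step. The symmetric argument using $1-p(1-x)$ and $F_R(1-\delta)$ handles the neighbourhood $[1-\delta,1]$ of the endpoint $1$. The upshot is a geometric drift inequality $\ex[V(X_{n+1})\mid X_n=x] \le \lambda V(x) + b\,\indicator_{C}(x)$ for some $\lambda<1$, $b<\infty$, with $C$ a compact subset of the open interval $(0,1)$.

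Next I would use $\Ec$ to prove the minorization: the transition kernel, restricted to a compact interior set, dominates a fixed nontrivial measure. The function $g(z)$ is engineered so that for \emph{every} starting point $y$ in the endpoint neighbourhoods $[0,\delta]\cup[1-\delta,1]$, the one-step density of $X_{n+1}$ at a point $z\in(s,t)$ is at least $g(z)$: if $y\in[1-\delta,1]$ a leftward jump has a density at $z$ equal to $\tfrac1y f_L\!\bigl(\tfrac{y-z}{y}\bigr)$ (this is the change of variables from the proportion $L_{n+1}=(y-z)/y$ to the position $z$, valid because $z$ lies in the range where $f_L$ exists by the choice of $s,t,\delta$), weighted by the probability $p(y)$ of going left; similarly from $y\in[0,\delta]$ a rightward jump contributes $\tfrac{1}{1-y} f_R\!\bigl(\tfrac{z-y}{1-y}\bigr)$ weighted by $1-p(y)$. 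Combined with $\Ea$ (which bounds the relevant direction probabilities below by $\varepsilon$), this gives $P(y,dz) \ge \varepsilon\, g(z)\,dz$ on $(s,t)$ for all $y$ in the endpoint neighbourhoods, and $\gamma = \int_s^t g(z)\,dz > 0$ makes this a genuine minorization with mass $\varepsilon\gamma$. One more step is needed to extend the minorization from just the endpoint neighbourhoods to the whole compact set $C$ appearing in the drift condition: by the drift inequality (or directly, using that from any interior point the chain can reach $[0,\delta]\cup[1-\delta,1]$ with positive probability in finitely many steps, then apply the one-step minorization), some finite-step kernel $P^{m}(x,\cdot)$ is minorized uniformly over $x\in C$ by a nontrivial measure. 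Hence $C$ is a small set.

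Finally, I would invoke the standard result (e.g.\ Meyn–Tweedie, or the Harris theorem) that a geometric drift condition towards a small set implies the chain is positive Harris recurrent and geometrically ergodic, in particular ergodic with a unique stationary distribution. The main obstacle, as flagged above, is purely in the drift step: making the Lyapunov estimate near the endpoints rigorous without assuming integrability of $1/(1-L_n)$ or $1/(1-R_n)$ — I would resolve this by choosing $V$ bounded (e.g.\ $V(x) = \min\{M, 1/(x(1-x))\}$ for large $M$, or even just $V \equiv$ the indicator-weighted function that is $1$ on $C$ and larger on the endpoint neighbourhoods) and checking a possibly multi-step drift inequality, which suffices since all we ultimately need is that the small set $C$ is reached geometrically fast. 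Everything else is a direct unpacking of the hypotheses $\Ea$–$\Ec$.
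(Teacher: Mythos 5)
Your minorization step is essentially the paper's argument: from any $y\in[0,\delta]\cup[1-\delta,1]$, conditions $\Ea$ and $\Ec$ give $\pr(X_1(y)\in B)\ge \varepsilon\int_{B\cap(s,t)}g(z)\,dz=\varepsilon\gamma\,\varphi(B)$, exactly as in the paper. The genuine gap is in your drift step, and it is twofold. First, the Lyapunov candidate $V(x)=1/(x(1-x))$ fails outright: a leftward jump from $x$ lands at $x(1-L_{n+1})$, so the conditional expectation of $1/X_{n+1}$ involves $\ex[1/(1-L_{n+1})]$, which is infinite already for $F_L=U[0,1]$ (and $F_L$ may even have an atom at $1$ under $\Ea$--$\Ec$, making $V(X_{n+1})=\infty$ with positive probability); you flag this, but the proposed repair is left vague. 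Second, and more seriously, your claim that a large rightward jump from $x\in[0,\delta]$ ``lands in the interior'' is false: $x+(1-x)(1-\delta)\ge 1-\delta$ puts the chain in $[1-\delta,1]$, i.e.\ in the neighbourhood of the \emph{other} endpoint, where $1/(x(1-x))$ is again large. So the inequality $\ex[V(X_{n+1})\mid X_n=x]\le\lambda V(x)+b\indicator_C(x)$ with $C$ a compact subset of $(0,1)$ is not established, and indeed the hypotheses give no attraction towards the interior at all --- $\Eb$ says precisely the opposite, that with probability at least $\eta$ the jump proportion exceeds $1-\delta$ and the chain lands \emph{near an endpoint}.

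The fix is to abandon the Lyapunov function and take the recurrent small set to be the endpoint set itself, $V:=[0,\delta]\cup[1-\delta,1]$, which is what the paper does. Condition $\Eb$ gives $\pr(X_1(x)\in V)\ge\eta$ for \emph{every} $x\in[0,1]$ (a left jump with $L_{n+1}>1-\delta$ lands in $[0,\delta]$, a right jump with $R_{n+1}>1-\delta$ lands in $[1-\delta,1]$), whence $\pr(\tau_V(x)>n)\le(1-\eta)^n$ and the return time to $V$ has uniformly bounded expectation with no drift inequality needed. Combined with your (correct) one-step minorization on $V$, this verifies the hypotheses of a Harris/Doeblin ergodic theorem directly; indeed one even gets the uniform two-step bound $\pr(X_2(x)\in B)\ge\eta\varepsilon\gamma\,\varphi(B)$ for all $x\in[0,1]$, so the whole state space is small. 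Your alternative assembly --- making an interior set $C$ small by first travelling to the endpoint neighbourhoods and then applying the minorization --- can be made to work, but it presupposes exactly the recurrence to $V$ just described, so it is strictly more circuitous than using $V$ itself.
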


\begin{proof}
Let $\tau_B (x) := \inf \{k \ge 1: X_k(x) \in B\}$, where   $X_n(x)$ denotes the value of the chain after $n$ steps when  $X_0 = x$, and $B \in \borel([0,1])$ (the Borel subsets of $[0,1]$). It follows from Theorems $1.3$, $2.1$ in \cite{Borovkov} that $X$ will be ergodic provided that there exists a $V \in \borel([0,1])$, a probability measure $\varphi$ on $([0,1], \borel([0,1]))$, and a $q >0$  such that
\begin{equation*}
\begin{array}{ll}
\text{(I$'$)} & \pr(\tau_V(x) < \infty) = 1, \; \forall x \in [0,1],\vspace{1mm}\\
\text{(I)} & \sup_{x \in V} \ex \tau_V(x) < \infty,\vspace{1mm}\\
\text{(II)} & \pr(X_{1}(x) \in B) \ge  q \varphi(B), \; \forall x \in V, \; \forall B \in \borel([0,1])
\end{array}\\
\end{equation*}
(note that condition (II) implies aperiodicity of $X$).

Set $V := [0,\delta] \cup [1-\delta,1]$.  As $\pr(X_1(x) \in V) \ge \eta,$ $x \in [0,1]$, by condition $\Eb$,   the standard argument yields that, for any $x_0\in [0,1] $  and $n \ge 1$,
\begin{align*}
\pr(\tau_V(x_0) > n) &= \pr \Biggr( \bigcap_{k=1}^n \{ X_k(x_0) \notin V \} \Biggr)\\
&= \underbrace{\int_{V^c} \cdots \int_{V^c}}_{n-1} \left[ \prod_{k=1}^{n-1} \pr (X_1(x_{k-1}) \in dx_k) \right]\pr (X_1(x_{n-1}) \notin V)\\
& \le (1-\eta) \underbrace{\int_{V^c} \cdots \int_{V^c}}_{n-2} \left[ \prod_{k=1}^{n-2} \pr (X_1(x_{k-1}) \in dx_k) \right] \pr (X_1(x_{n-2}) \notin V)\\
& \le (1-\eta)^n.
\end{align*}
Hence  (I$'$) and (I) hold true.

To show that (II) is also satisfied, let
\begin{equation}\label{var_meas}
\varphi(B) : = \frac{1}{\gamma} \int_{B \cap (s,t)} g(y) \, dy, \quad B \in \borel([0,1]).
\end{equation}
Using conditions $\Ea$ and $\Ec$, we have for $B \in \borel([0,1])$ and $x \in [0,\delta]$,
\begin{align*}
\pr(X_{1}(x) \in B) &\ge \pr(X_{1}(x) \in B \cap (s,t))\\
&= (1-p(x)) \int_{B \cap (s,t)}  \frac{1}{1-x} f_R \Bigr (\frac{y-x}{1-x} \Bigr) dy\\
&\ge \varepsilon \int_{B \cap (s,t)}  g(y) \, dy\\
&= \varepsilon \gamma \varphi(B).
\end{align*}
The same argument shows that the above lower bound also holds when $x \in [1-\delta,1]$. Therefore (II) is met with $q = \varepsilon \gamma$ and $\varphi$ defined by~\eqref{var_meas}. The lemma is proved.
\end{proof}

Now we turn to deriving closed form expressions for stationary distributions in the ergodic case when both $F_L$ and $F_R$ are absolutely continuous, with densities denoted by $f_L(x)$ and $f_R(x)$, $x \in (0,1)$, respectively. Since the transition probability of   $X$ now has density
\begin{equation}\label{kernel}
f(x,y)
 := \pr(X_1(x) \in dy)/dy
  = \left \{ \begin{array}{ll}
\frac{1-p(x)}{1-x} f_R \bigr(\frac{y-x}{1-x} \bigr), & 0 \le x < y < 1,
 \vspace{1mm}\\
 \frac{p(x)}{x} f_L \bigr( \frac{x-y}{x} \bigr),  & 0 < y < x \le 1,
\end{array}\right.\\
\end{equation}
the stationary distribution $\Pi$ (when it exists) will also be absolutely continuous with density $\pi(x)$ satisfying the usual integral equation
\begin{equation}\label{IE}
u(y) = \int_0^1 u(x) f(x,y) \, dx, \quad 0<y<1.
\end{equation}
The existence of the stationary density is obvious from the standard relations
\begin{align*}
\Pi(B) &= \int_0^1 \pr(X_1(x) \in B) \, \Pi (dx) = \int_0^1 \biggr[ \int_B f(x,y) dy \biggr] \Pi (dx)\\
&=  \int_B \biggr[ \int_0^1 f(x,y) \Pi (dx) \biggr] dy =  \int_B \pi (y) \, dy, \quad B  \in \borel([0,1]),
\end{align*}
where $\pi(y) := \int_0^1 f(x,y) \, \Pi (dx)$ and the second last equality follows from Fubini's theorem.

The case when $f_L(x) \equiv f_R(x) \equiv 1$, $x \in (0,1)$, was studied in \cite{Ramli}. In that case, one can differentiate integral equation \eqref{IE} to obtain a simple separable differential equation that is easily solved to give a closed form for the stationary density $f(x)$ (coinciding with our $f$ from \eqref{e_thm} below with $l=r=1$).

We extend this result to the case where $f(x,y)$ has the semidegenerate form: for some $N,M\ge 1,$
\begin{equation}\label{semikernel}
f(x,y) = \left\{ \begin{array}{ll}
\sum_{i=1}^N a_i (y)b_i(x), & \quad 0 \le x < y < 1,
  \vspace{1mm}\\
\sum_{j=1}^M c_j(y)d_j(x),  & \quad 0 < y \le x \le 1,
\end{array}\right.\\
\end{equation}
and the factors satisfy  the following conditions:

\begin{itemize}
\item[{$\Ca$}] all  $a_i (y), c_j (y)$ are continuous on $(0,1)$;
\item[{$\Cb$}] all  $b_i (x)$ are piecewise continuous on $[0,1)$;
\item[{$\Cc$}] all  $d_j (x)$ are  piecewise continuous on $(0,1]$.
\end{itemize}

The next assertion shows that the stationary distribution of the Markov chain $X$, with transition density \eqref{semikernel}, solves a two-point boundary value problem for a system of $N+M$ ordinary differential equations. Our result is an extension of Theorem $3.1$ in \cite{Golberg} that was proved under the more restrictive assumption that all $a_i (y)$, $b_i (x)$, $c_j (y)$, and $d_j (x)$ in \eqref{semikernel} are continuous on $[0,1]$.

\begin{theorem}\label{Golberg}
Suppose that $f(x,y)$ has the semidegenerate form \eqref{semikernel} satisfying $\Ca$--$\Cc$ and let \eqref{IE} have  a solution $u(y)$ that is integrable on $(0,1)$. Then the following claims are true.
\begin{itemize}

\item[\emph{(i)}] Any such solution $u(y)$ is continuous on $(0,1)$.

\item[\emph{(ii)}] If
\begin{align}
\alpha_i(y) := \int_0^y b_i(x) u(x) \, dx, \quad i=1,2, \ldots, N,\label{alphaInt}\\
\beta_j(y) := \int_y^1 d_j(x) u(x) \, dx,\quad j=1,2, \ldots, M,\label{betaInt}
\end{align}
then
\begin{equation}\label{sol}
u(y) = \sum_{i=1}^N a_i(y) \alpha_i(y) + \sum_{j=1}^M c_j(y) \beta_j(y), \quad 0 < y < 1.
\end{equation}

Let $S$ be the union of finite discontinuity sets for $b_i (x)$, $i=1,2,\ldots,N$, and $d_j (x)$, $j=1,2,\ldots,M$. Then
\begin{align}
\alpha_i'(y) &= b_i(y)u(y), \quad y \in (0,1) \backslash S,\label{alphaDE}\\
-\beta_j'(y) &= d_j(y)u(y), \quad y \in (0,1) \backslash S,\label{betaDE}
\end{align}
\begin{equation}\label{BC}
\alpha_i(0+) = 0, \quad i=1,2,\ldots,N, \quad \beta_j(1-) = 0, \quad j=1,2,\ldots,M.
\end{equation}
\item[\emph{(iii)}] Conversely, let $\alpha_i(y)$, $i=1,2,\ldots,N$, and $\beta_j(y)$, $j=1,2,\ldots,M$, be continuous solutions of \eqref{sol}--\eqref{BC} such that $u(y)$ is integrable on $(0,1)$. Then $u(y)$ given by \eqref{sol} is a solution of \eqref{IE}.
\end{itemize}
\end{theorem}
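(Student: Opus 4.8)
The plan is to deduce all three parts from the single fact that, since the diagonal $\{x=y\}$ is a null set, the integral in \eqref{IE} splits at $x=y$ into two pieces on which the kernel \eqref{semikernel} takes its two respective product forms. For part~(i) and formula \eqref{sol}, I would first record that $\alpha_i(y)$ and $\beta_j(y)$ in \eqref{alphaInt}--\eqref{betaInt} are finite for every $y\in(0,1)$: by $\Cb$ each $b_i$ is bounded on the compact interval $[0,y]\subset[0,1)$ and by $\Cc$ each $d_j$ is bounded on $[y,1]\subset(0,1]$, while $u$ is integrable, so the integrands there are integrable. Splitting \eqref{IE} at $x=y$ and inserting \eqref{semikernel} then gives \eqref{sol} directly. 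Now the right-hand side of \eqref{sol} is continuous on $(0,1)$: the $a_i,c_j$ are continuous by $\Ca$, and $\alpha_i,\beta_j$, being indefinite integrals of locally integrable functions, are (absolutely) continuous there. Hence $u$, equal to that right-hand side, is continuous on $(0,1)$---or, if one treats $u$ only as an integrable function, the right-hand side of \eqref{sol} is the desired continuous version, and passing to it changes neither \eqref{IE} nor \eqref{alphaInt}--\eqref{betaInt}. This proves (i) and \eqref{sol}.

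With $u$ continuous, for each $i$ the product $b_iu$ is piecewise continuous on $(0,1)$ and continuous at every point of $(0,1)\setminus S$, so the fundamental theorem of calculus applied to \eqref{alphaInt} yields $\alpha_i'(y)=b_i(y)u(y)$ there, which is \eqref{alphaDE}; the same reasoning applied to \eqref{betaInt} gives \eqref{betaDE}. For the boundary conditions \eqref{BC}, note that near $0$ the function $b_iu$ is integrable (again $b_i$ is bounded there), whence $\alpha_i(0+)=\lim_{y\downarrow0}\int_0^y b_i(x)u(x)\,dx=0$; symmetrically $\beta_j(1-)=0$. This completes (ii).

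For the converse (iii), I would run the argument backwards. Given continuous $\alpha_i,\beta_j$ on $(0,1)$ satisfying \eqref{sol}--\eqref{BC} with $u$ from \eqref{sol} integrable, observe first that $u$ is automatically continuous on $(0,1)$ as a finite sum of products of the continuous functions $a_i,c_j,\alpha_i,\beta_j$; hence $b_iu$ and $d_ju$ are continuous off the finite set $S$, so that \eqref{alphaDE}--\eqref{betaDE} hold in the ordinary pointwise sense there. Integrating \eqref{alphaDE} over each component of $(0,1)\setminus S$ and gluing the pieces using the continuity of $\alpha_i$ and the endpoint condition $\alpha_i(0+)=0$ recovers $\alpha_i(y)=\int_0^y b_i(x)u(x)\,dx$ for $0<y<1$; likewise $\beta_j(y)=\int_y^1 d_j(x)u(x)\,dx$ from \eqref{betaDE} and $\beta_j(1-)=0$. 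Substituting these two integral representations into \eqref{sol} and recombining the sums into $f(x,y)$ via \eqref{semikernel} gives $u(y)=\int_0^y f(x,y)u(x)\,dx+\int_y^1 f(x,y)u(x)\,dx=\int_0^1 f(x,y)u(x)\,dx$, i.e.\ \eqref{IE}.

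The proof is almost entirely bookkeeping; the one place that genuinely needs care---the main obstacle---is the regularity flow. One must be sure that \eqref{sol}, which a priori is only available where \eqref{IE} holds, legitimately upgrades $u$ to a continuous function so that the pointwise differentiation in \eqref{alphaDE}--\eqref{betaDE} is meaningful, and, in (iii), that a merely continuous $\alpha_i$ satisfying \eqref{alphaDE} off the finite set $S$ is still the indefinite integral of $b_iu$. Both points are handled by the local boundedness of the $b_i$ and $d_j$ afforded by $\Cb$--$\Cc$; this is precisely the ingredient replacing the stronger hypothesis of \cite{Golberg} that all factors be continuous on $[0,1]$, and it is what forces the appearance of the exceptional set $S$ in \eqref{alphaDE}--\eqref{betaDE} and of one-sided limits in \eqref{BC}.
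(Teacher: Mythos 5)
Your proposal is correct and follows essentially the same route as the paper's own proof: split the integral in \eqref{IE} at the diagonal to get \eqref{sol}, use the local boundedness of the $b_i,d_j$ supplied by $\Cb$--$\Cc$ together with the integrability of $u$ to make $\alpha_i,\beta_j$ absolutely continuous (hence $u$ continuous via $\Ca$), differentiate off the finite set $S$ for (ii), and reverse the argument for (iii). Your added remarks on choosing the continuous version of $u$ and on gluing the integral representation across the points of $S$ are slightly more explicit than the paper's treatment but do not change the argument.
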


\begin{remark}
There appears to be a typo in Theorem 3.1 in \cite{Golberg}, which contains $c_j(x)$ in the expressions on the right hand sides of \eqref{betaInt} and \eqref{betaDE} instead of $d_j(x)$, as above.
\end{remark}

\begin{proof}
(i) We have
\begin{align*}
u(y) &= \int_0^1 u(x) f(x,y) \, dx\\
&= \sum_{i=1}^N a_i (y) \int_0^y b_i(x) u(x) \, dx + \sum_{j=1}^M c_j(y) \int_y^1 d_j(x) u(x)\, dx.
\end{align*}
Each $a_i(y)$ and $c_j(y)$ is continuous on $(0,1)$ by $\Ca$, and so the continuity of $u(y)$, $y \in (0,1)$, follows since each integral on the right hand side above is absolutely continuous as $u(x)$ is integrable on $(0,1)$ and, by $\Cb$ and $\Cc$, for any $\varepsilon > 0$, $b_i(x)$ and $d_j(x)$ are bounded on $[0,1-\varepsilon]$ and $[\varepsilon,1]$, respectively.

(ii) That \eqref{sol} holds for $u(y)$ is obvious from \eqref{IE} and \eqref{semikernel}. Differentiating \eqref{alphaInt} and \eqref{betaInt} at $y \in (0,1) \backslash S$ (which is possible as the integrand is continuous at such $y$), we obtain \eqref{alphaDE} and \eqref{betaDE}, respectively. The boundary conditions \eqref{BC} follow from the definitions \eqref{alphaInt}, \eqref{betaInt}.

(iii) Now suppose that $\alpha_i(y)$, $i=1,2,\ldots,N$, and $\beta_j(y)$, $j=1,2,\ldots,M$, are continuous solutions to \eqref{sol}--\eqref{BC}. Since, as above, for any $\varepsilon > 0$, $b_i(x)$ and $d_j(x)$ are bounded on $[0,1-\varepsilon]$ and $[\varepsilon,1]$, respectively, and $u(y)$ is integrable on $(0,1)$, we have from \eqref{alphaDE}--\eqref{BC} and the assumed continuity  that
the functions $\alpha_i(y)$ and $\beta_j(y)$ are given by the right hand sides of \eqref{alphaInt} and \eqref{betaInt}, respectively. Substituting these representations into \eqref{sol} shows that $u(y)$ satisfies \eqref{IE} with $f(x,y)$ given by \eqref{semikernel}. The theorem is proved.
\end{proof}

Theorem~\ref{Golberg} allows us to easily derive the form of the stationary distribution when
\begin{equation}\label{LRbeta}
F_L = \beta(1,l), \quad F_R = \beta(1,r), \quad l,r>0.
\end{equation}
Indeed, in this case $f_L(y) = ly^{l-1}$, $f_R(y) = r(1-y)^{r-1}$, $y \in (0,1)$, so that the transition density \eqref{kernel} for the chain has the semidegenerate form \eqref{semikernel} with $N=M=1$ and
\begin{equation}\label{abcd}
a_1(y) = r(1-y)^{r-1}, \quad b_1(x) = \frac{1-p(x)}{(1-x)^r}, \quad c_1(y) = ly^{l-1}, \quad d_1(x) = \frac{p(x)}{x^l}.
\end{equation}

\begin{theorem}\label{t_main}
Assume that $p(x)$ is piecewise continuous on $[0,1]$ and satisfies $\Ea$, and that \eqref{LRbeta} holds true. Then $X$  is ergodic with stationary density
\begin{equation}\label{e_thm}
\pi(x) = C x^{l} \left( \frac{r}{1-x} + \frac{l}{x} \right) \exp \left( -r \int_{1/2}^x \frac{p(t)}{1-t} \, dt - l \int_{1/2}^x \frac{p(t)}{t} \, dt \right), \quad 0<x<1,
\end{equation}
where $C>0$ is a normalising constant such that $\int_0^1 \pi(x) \, dx = 1$.
\end{theorem}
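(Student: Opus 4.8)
The plan is to combine Lemma~\ref{ergodic}, which gives ergodicity and hence a unique stationary law, with Theorem~\ref{Golberg}(iii), which I will use to check that the function in \eqref{e_thm} solves the stationarity equation \eqref{IE}.

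First I would verify that conditions $\Ea$--$\Ec$ hold with a common $\delta$. Condition $\Ea$ is assumed. For $\Eb$, note that $F_L(1-\delta)=(1-\delta)^l<1$ and $F_R(1-\delta)=1-\delta^r<1$, so $\Eb$ holds for every $\delta\in(0,1/2)$. For $\Ec$, since $f_L(u)=lu^{l-1}$ and $f_R(u)=r(1-u)^{r-1}$ are continuous and strictly positive on $(0,1)$, choosing e.g.\ $s=1/3$, $t=2/3$ and $\delta$ small enough keeps the arguments of $f_L$ and $f_R$ appearing in the definition of $g$ inside a fixed compact subinterval of $(0,1)$, so $g$ is bounded below by a positive constant on $(s,t)$ and $\gamma>0$. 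Shrinking $\delta$ if necessary so that $\Ea$ still holds, Lemma~\ref{ergodic} gives ergodicity, hence at most one stationary distribution. Consequently it suffices to produce a single probability density solving \eqref{IE}: the associated measure is then stationary (by the computation in the text following \eqref{IE}, run in reverse) and therefore is the stationary law.

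Under \eqref{LRbeta} the kernel \eqref{kernel} is semidegenerate as in \eqref{semikernel} with $N=M=1$ and the factors \eqref{abcd}, which satisfy $\Ca$--$\Cc$ since $p$ is piecewise continuous. Set $E(y):=\exp\bigl(-r\int_{1/2}^y\tfrac{p(t)}{1-t}\,dt-l\int_{1/2}^y\tfrac{p(t)}{t}\,dt\bigr)$, so that $E$ is positive and continuous on $(0,1)$ and, off the finite discontinuity set $S$ of $p$, differentiable with $E'=-p\bigl(\tfrac{r}{1-y}+\tfrac{l}{y}\bigr)E$; then \eqref{e_thm} reads $\pi(y)=Cy^{l-1}\bigl(\tfrac{ry}{1-y}+l\bigr)E(y)$. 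The device is to put $\beta_1(y):=CE(y)$ and $\alpha_1(y):=Cy^lE(y)/(1-y)^r$. Then $r(1-y)^{r-1}\alpha_1(y)+ly^{l-1}\beta_1(y)=\pi(y)$, which is \eqref{sol}; and differentiating, using the expression for $E'$, gives $\alpha_1'=\tfrac{1-p}{(1-y)^r}\pi$ and $-\beta_1'=\tfrac{p}{y^l}\pi$ on $(0,1)\setminus S$, which are \eqref{alphaDE}--\eqref{betaDE}. (One is led to $\beta_1\propto E$ on observing that \eqref{sol}--\eqref{betaDE} force $(y^l\beta_1)'=\bigl((1-y)^r\alpha_1\bigr)'$.)

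I expect the main obstacle to be the remaining hypotheses of Theorem~\ref{Golberg}(iii): the boundary conditions \eqref{BC} and the integrability of $\pi$ on $(0,1)$ --- exactly where condition $\Ea$ is used. Near $y=1$, $\Ea$ forces $p\ge\varepsilon$, so $\int_{1/2}^y\tfrac{p(t)}{1-t}\,dt\to\infty$, hence $E(1-)=0$ and $\beta_1(1-)=0$; moreover $\tfrac{E(y)}{1-y}\le\tfrac{-E'(y)}{r\varepsilon}$ near $1$, which gives integrability of $\pi$ there. Near $y=0$, $\Ea$ forces $1-p\ge\varepsilon$, and rewriting $\ln y+\int_y^{1/2}\tfrac{p(t)}{t}\,dt=-\ln 2-\int_y^{1/2}\tfrac{1-p(t)}{t}\,dt\to-\infty$ shows $y^lE(y)\to0$, so $\alpha_1(0+)=0$, while the analogous bound gives integrability of $\pi$ near $0$. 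Choosing $C$ so that $\int_0^1\pi=1$, Theorem~\ref{Golberg}(iii) then yields that $\pi$ solves \eqref{IE}, and by the uniqueness noted above $\pi$ is the stationary density. As a final sanity check I would specialise to a tractable case (e.g.\ $l=r=z$ with linear $p$), where the exponential becomes elementary and $\pi$ reduces to a beta density, confirming \eqref{e_thm}.
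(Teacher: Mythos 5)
Your proposal is correct and follows essentially the same route as the paper: ergodicity via Lemma~\ref{ergodic}, then the semidegenerate-kernel machinery of Theorem~\ref{Golberg} with the factors \eqref{abcd}, the only cosmetic difference being that you verify the guessed pair $(\alpha_1,\beta_1)$ directly rather than deriving it by adding and integrating the ODE system (and your parenthetical remark recovers that derivation anyway). Your treatment of the boundary conditions and of the integrability of $\pi$ near $0$ and $1$ via $\Ea$ is in fact more explicit than the paper's; the only slip is the value of $F_L(1-\delta)$ (it is $1-\delta^{l}$ for $\beta(1,l)$, not $(1-\delta)^{l}$), which is immaterial since either quantity is less than $1$.
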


\begin{proof}
It follows from the assumptions that $X$ also satisfies $\Eb$ and $\Ec$ (all with a common~$\delta$) and so is ergodic by Lemma \ref{ergodic}. Since the transition probabilities have densities~\eqref{kernel}, $X$ has a stationary density $\pi$.

Clearly, the set $S$ of discontinuity points of the factors $b_i$ and $d_j$  (see Theorem~\ref{Golberg}) coincides here with the finite set of all values $x$ and $ 1-x$ such that $x$ is a discontinuity point of~$p(\,\cdot\,)$  and the factors~\eqref{abcd} satisfy conditions $\Ca$--$\Cc$.

Substituting \eqref{abcd} into \eqref{alphaDE}--\eqref{betaDE} with $u(y)$ given by \eqref{sol} and letting $\alpha(y) := \alpha_1(y)$, $\beta(y) := \beta_1(y)$, we obtain
\begin{align}
\alpha'(y)(1-y)^r  &= (1-p(y)) \left( r(1-y)^{r-1} \alpha(y) +  l y^{l-1} \beta(y) \right), \quad y \in  (0,1) \backslash S,\notag\\
-\beta'(y)y^l &= p(y) \left( r(1-y)^{r-1} \alpha(y) + l y^{l-1} \beta(y) \right), \quad y \in (0,1) \backslash S.\label{beta1}
\end{align}
Adding the equations yields
\begin{equation*}
\alpha'(y)(1-y)^r - r(1-y)^{r-1} \alpha(y) = \beta'(y) y^l +  l y^{l-1} \beta(y), \quad y \in (0,1) \backslash S,
\end{equation*}
which is equivalent to $( \alpha(y)(1-y)^r )' =  ( \beta(y) y^l)'$. Integrating and assuming that $\alpha$ and $\beta$ are continuous on $(0,1)$, we conclude that
\begin{equation}\label{alphabetarel}
\alpha(y)(1-y)^r = \beta(y) y^l + C_1, \quad y \in (0,1),
\end{equation}
where the boundary condition $\alpha(0+)=0$ ensures that $C_1 = 0$.

Substituting \eqref{alphabetarel} into the second relation in \eqref{beta1}, we obtain the separable differential equation
\begin{equation*}
\beta'(y) = -\beta(y) p(y) \left( \frac{r}{1-y} +\frac{l}{y} \right), \quad y \in (0,1) \backslash S,
\end{equation*}
with the general solution
\begin{equation}\label{betaform}
\beta(y) = C_2 \exp \left( -r \int_{1/2}^y \frac{p(t)}{1-t} \, dt  -l \int_{1/2}^y \frac{p(t)}{t} \, dt \right), \quad y \in (0,1),
\end{equation}
where $C_2$ is a constant. Now from \eqref{alphabetarel} we also have
\begin{equation}\label{alphaform}
\alpha(y) = C_2  \frac{y^l}{(1-y)^r}   \exp \left( -r \int_{1/2}^y \frac{p(t)}{1-t} \, dt  -l \int_{1/2}^y \frac{p(t)}{t} \, dt \right), \quad y\in (0,1),
\end{equation}
and we see that both $\alpha(y)$ and $\beta(y)$ are continuous and $u(y)$ given by \eqref{sol} is integrable on $(0,1)$ indeed. It follows from \eqref{abcd}, \eqref{betaform}, \eqref{alphaform} and Theorem~\ref{Golberg} (iii) that the right hand side of \eqref{e_thm} is a solution to integral equation \eqref{IE} and so is equal to the stationary density of $X$ when $C>0$ is chosen so that $\int_0^1 \pi(x) dx = 1$.
\end{proof}

\begin{remark}
Note that the above approach can  be used to compute (at least, numerically) the stationary density of $X$  when $p(x)$ satisfies $\Ea$, while $F_L=   \beta(l_1,l_2)$, $l_1 \in \naturals$, $l_2>0$, and  $F_R=  \beta(r_1,r_2)$, $r_1 \in \naturals$, $r_2>0$. Indeed, in this case we have
\begin{equation*}
f_L \biggr( \frac{x-y}{x} \biggr) = \frac1{B(l_1,l_2)}\biggr(\frac{x-y}{x} \biggr)^{l_1-1}\biggr(\frac{y}{x} \biggr)^{l_2-1}, \quad 0 < y < x \le 1,
\end{equation*}
and
\begin{equation*}
f_R \biggr(\frac{y-x}{1-x} \biggr) = \frac1{B(r_1,r_2)}\biggr(\frac{y-x}{1-x} \biggr)^{r_1-1}\biggr(\frac{1-y}{1-x} \biggr)^{r_2-1}, \quad 0 \le x < y < 1,
\end{equation*}
and so the transition probabilities \eqref{kernel} are semidegenerate with $N=r_1$, $M=l_1$. It remains to solve the two-point boundary value problem for the system of $l_1+r_1$ ordinary differential equations.

One could also use the same approach to compute the stationary density when $p(x)$ satisfies $\Ea$ and the distributions $F_L, F_R$ are finite mixtures of $\beta(1,z)$ with different $z$-values or, more generally, are of the form
\begin{equation*}
f_L(x) = \sum_{j=1}^M \mu_j (1-x)^{l_j-1}, \quad \sum_{j=1}^M \mu_j /l_j=1, \quad \mu_j \in \reals,\  l_j> 0,  \quad j=1,\ldots, M.
\end{equation*}
and
\begin{equation*}
f_R(x) = \sum_{i=1}^N \lambda_i (1-x)^{r_i-1}, \quad \sum_{i=1}^N \lambda_i/r_i =1, \quad \lambda_i \in \reals, \ r_i > 0,  \quad i=1,\ldots, N,
\end{equation*}
In this case, the transition density   \eqref{kernel} is also semidegenerate, and it remains to solve the two-point boundary value problem for the system of $N+M$ ordinary differential equations.
\end{remark}


\section{Examples}\label{S_ex}



\subsection{The case of polynomial $p(x)$}\label{S_linear}

Suppose that the function $p$ is polynomial: for a $k\in \naturals$, one has
\[
p(x) =\sum_{n=0}^k p_n x^n=\sum_{n=0}^k q_n (x-1)^n  ,
\]
where, of course, $p_n=p^{(n)}(0)/n!$ and $q_n=p^{(n)}(1)/n!$, $n\ge 0,$ and $p(x)\ge 0$ for $x\in [0,1].$ Assuming that $p_0 <1$ and $ q_0 >0$  to ensure that condition $\Ea$ is satisfied, and that \eqref{LRbeta} holds true, we see that the conditions of Theorem~\ref{t_main} are met, and so the Markov chain $X$ is ergodic. A straightforward computation of the integral in \eqref{e_thm}   shows that $X$ has stationary density of the form
\[
\pi(x) = Cx^{l(1-p_0)-1} (1-x)^{rq_0-1}
 (l+ (r-l)x) \exp \left(r\sum_{n=1}^k \frac{q_n}n (x-1)^n
 - l \sum_{n=1}^k \frac{p_n}n x^n \right),
\]
$0<x<1.$

In particular, if $p(x)$ is linear:
$p(x) =  cx+(1-b)(1-x), $  where $b,c \in (0,1]$, and $l=r=z $:
\begin{equation}
\label{common_z}
F_L=F_R= \beta(1,z), \quad z>0,
\end{equation}
we immediately obtain that $X$ has stationary distribution $\beta(bz, cz)$. This is a direct  extension of Theorem~3 in \cite{Pacheco} and Proposition~1 in \cite{Stoyanov2}, where the special cases $p(x) \equiv x$, and $p(x) = p \in (0,1)$, respectively, were considered.


\subsection{The case of piecewise constant $p(x)$}\label{S_piecewise}

Next we consider the case of piecewise constant $p(x)$, under the assumption~\eqref{common_z}. We will show, in particular,  that in that case one can obtain multimodal stationary densities, with modes located at the  discontinuity points of $p(x)$.

Suppose that
\begin{equation*}
p(x) = p_i\in [0,1], \quad s_{i-1} \le x < s_i, \quad i=1, \ldots, k,
\end{equation*}
where  $k \in \naturals$, $0=s_0 < s_1 < \cdots < s_k = 1$, and $p_1 <1$,   $p_k >0$. Then $p(x)$ satisfies $\Ea$, and Theorem~\ref{t_main} implies that  the Markov chain $X$ has stationary density
\begin{equation}\label{ex2_den}
\pi(x) = C_i x^{z(1-p_i)-1}(1-x)^{zp_i-1}, \quad s_{i-1} \le x < s_i, \quad i=1, \ldots, k,
\end{equation}
where $C_1, \ldots, C_k$ are positive constants. That is, the (continuous) stationary density of $X$ is ``glued" of pieces of different beta densities on disjoint intervals $(s_{i-1},s_i)$.

To find the constants $C_i$, we note that, by the continuity of $\pi (x)$,
\begin{equation}\label{C_rel}
C_{i} = C_{i -1}\biggr( \frac{s_{i-1}}{1-s_{i-1}} \biggr)^{z(p_{i}-p_{i-1})}= C_1
 \prod_{j=1}^{i-1} \biggr( \frac{s_j}{1-s_j} \biggr)^{z(p_{j+1}-p_j)}, \quad 2\le  i\le k.
\end{equation}
Using notation $B_x(a,b):=\int_0^x t^{a-1}(1-t)^{b-1}dt,$ $x \in [0,1]$, $a,b>0,$ for the incomplete beta function, we obtain from  the   relation   $\int_0^1 \pi(x) \, dx = 1$, \eqref{ex2_den} and \eqref{C_rel} that
\begin{align*}
1 
&= \sum_{i=1}^k  C_i  \bigl[ B_{s_i}(z(1-p_i),zp_i) - B_{s_{i-1}}(z(1-p_i),zp_i) \bigr]\\
&= C_1  \sum_{i=1}^k  \bigl[ B_{s_i}(z(1-p_i),zp_i) - B_{s_{i-1}}(z(1-p_i),zp_i) \bigr]\prod_{j=1}^{i-1} \biggr( \frac{s_j}{1-s_j} \biggr)^{z(p_{j+1}-p_j)},
\end{align*}
thus yielding $C_1$ as the reciprocal of the sum on the right-hand side, the values of $C_2, \ldots, C_k$ being now given by~\eqref{C_rel}. Note that the assertion of Theorem~1 in \cite{Bialkowski} is a  special case of the above general formula, corresponding to  $(k,z,p_1,p_2,s_1)= (2,1,0,1,1/2)$.

\begin{figure}[ht]
	\centering
	\subfigure[$p_1=0$, $p_2=1$, $s_1=0.5$]{\includegraphics[scale=0.52]{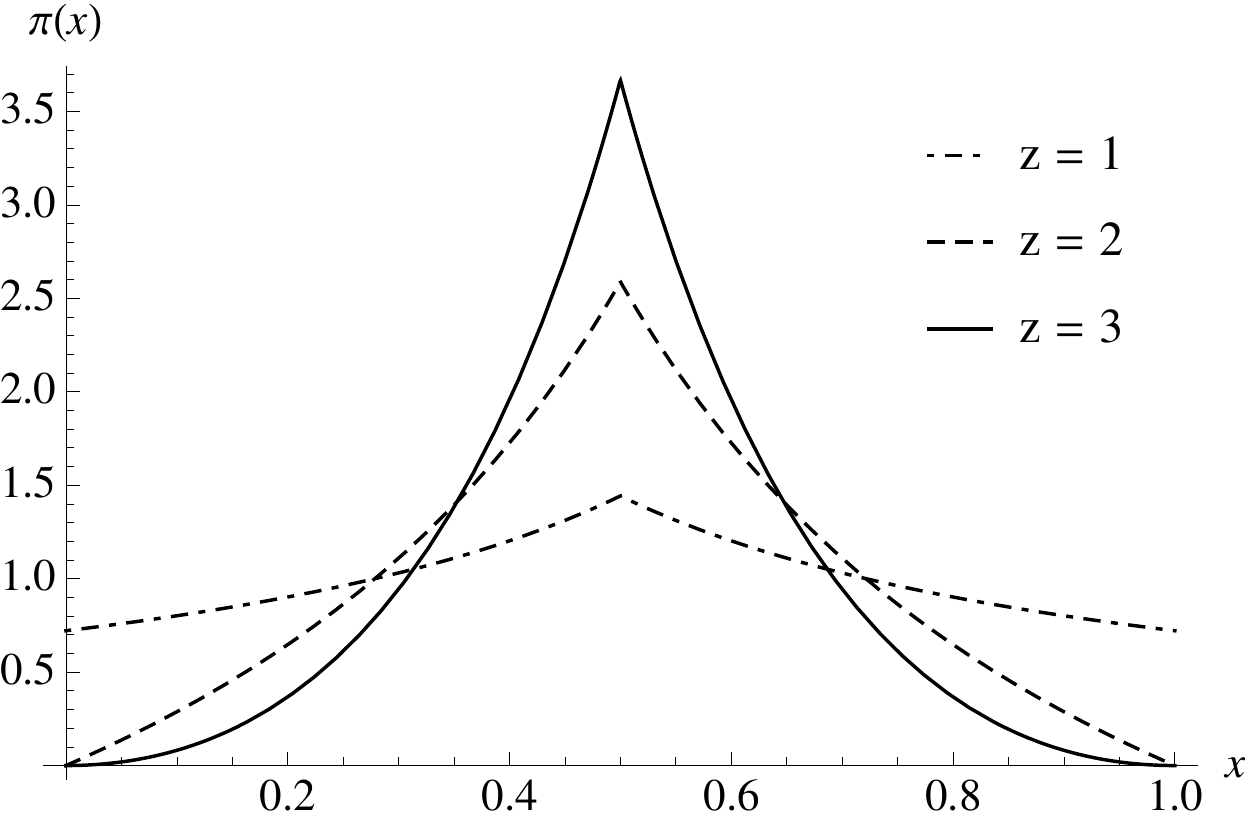}}
	  \hspace{0.3cm}
	\subfigure[$p_1=0$, $p_2=1$, $z=2$]{\includegraphics[scale=0.54]{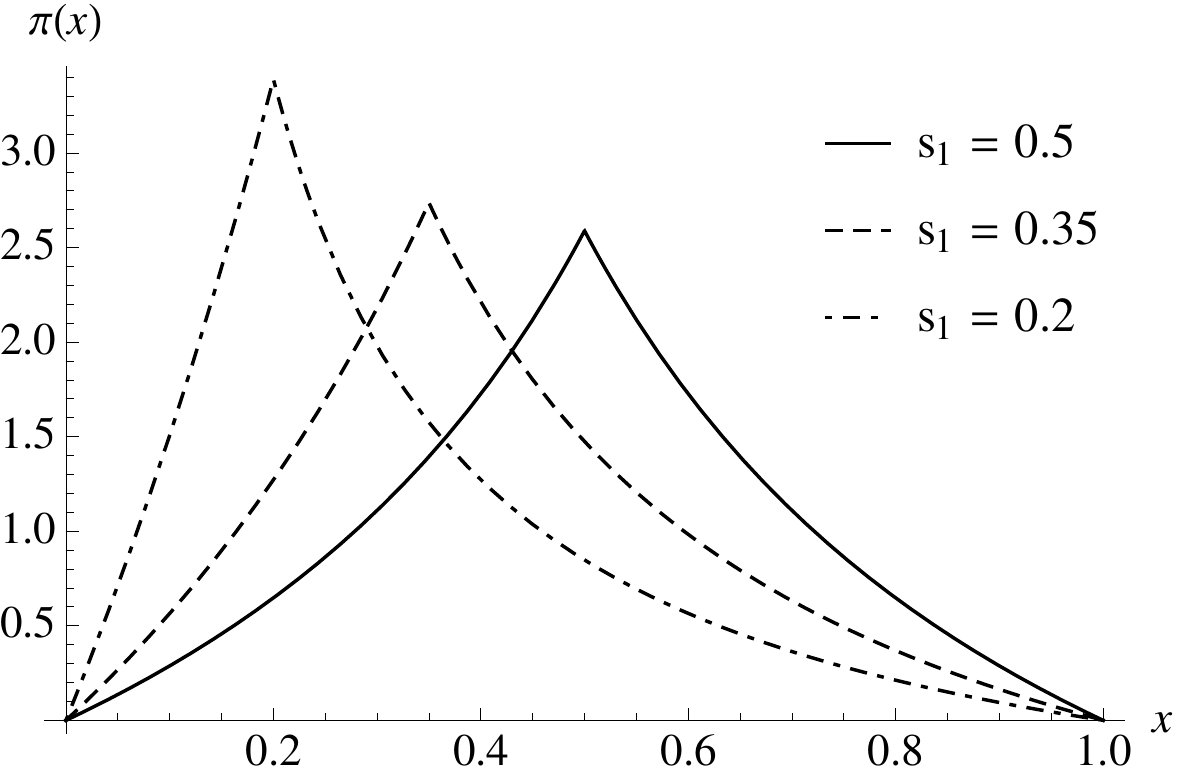}}
	\subfigure[$s_1=0.5$, $z=2$]{\includegraphics[scale=0.52]{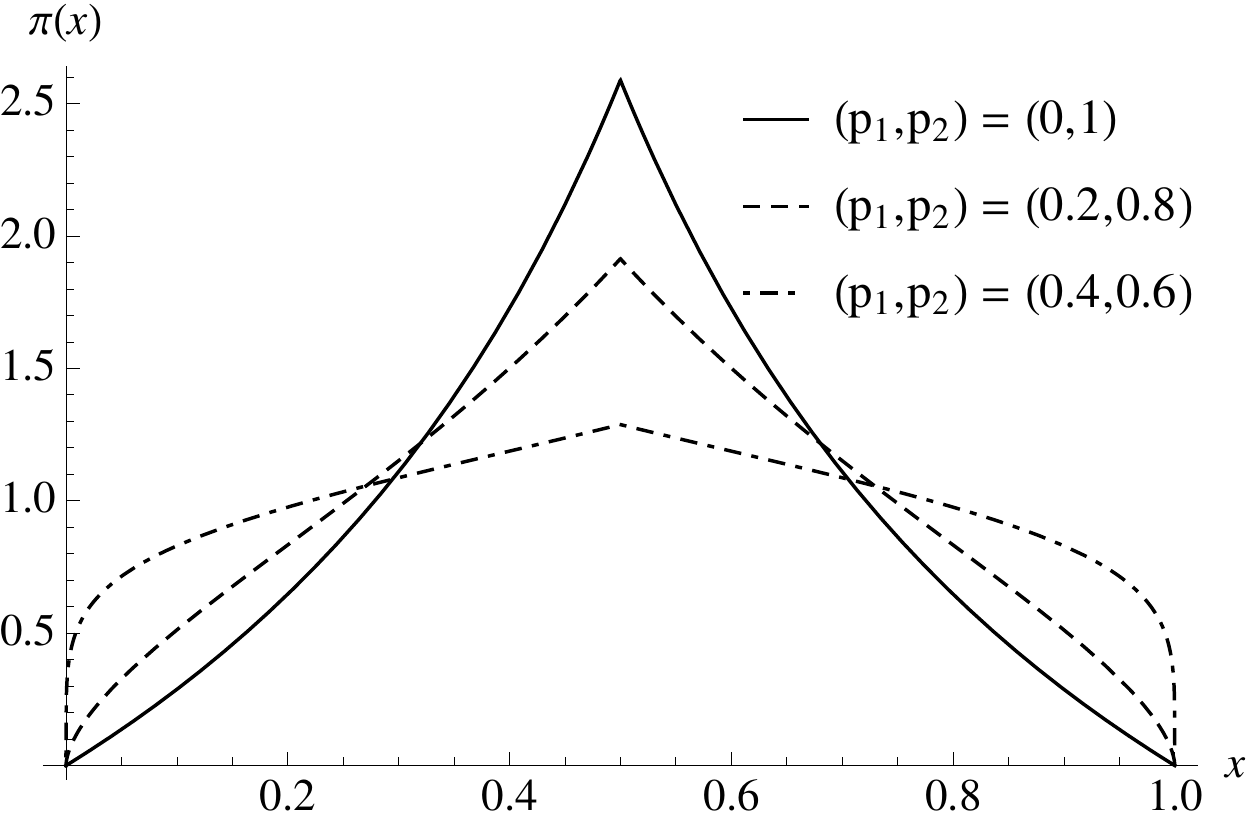}}
	  \hspace{0.3cm}
	\subfigure[$s_1=0.5$, $p_2=1$, $z=2$]{\includegraphics[scale=0.51]{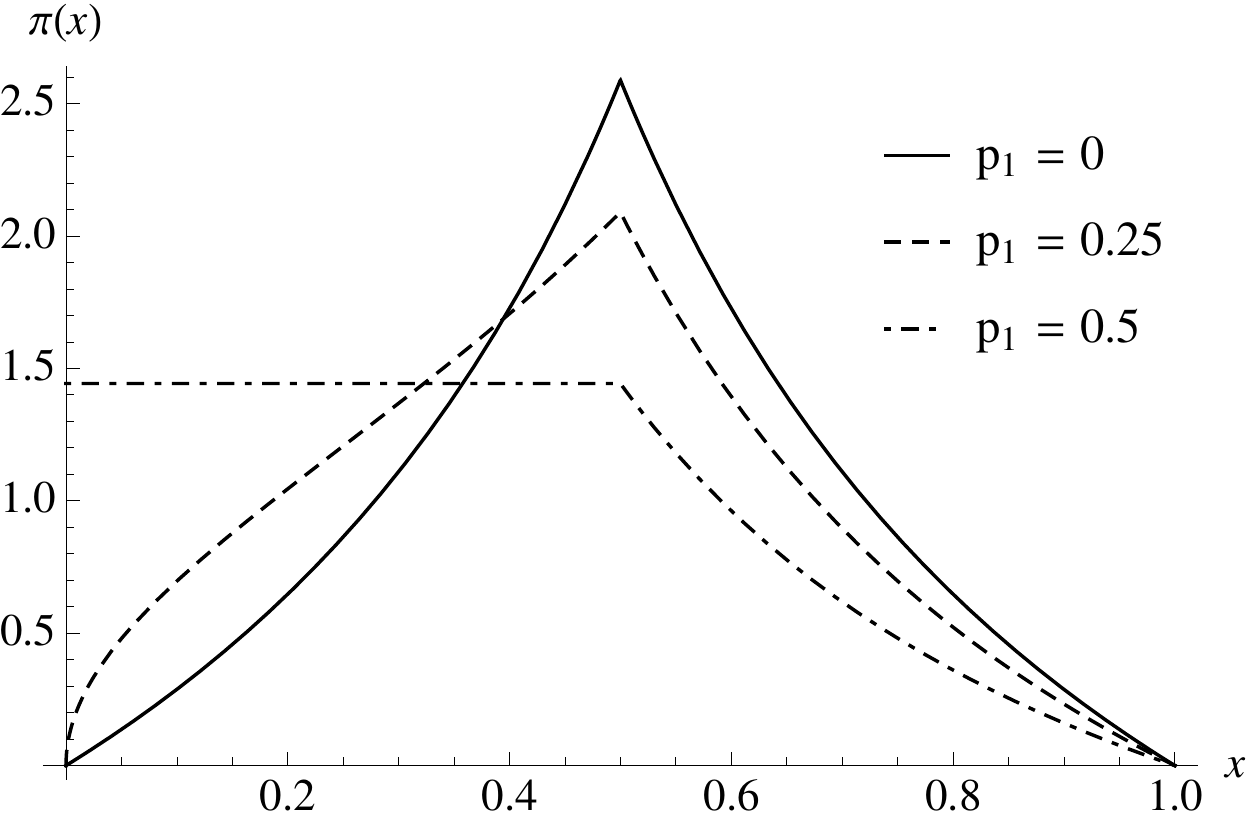}}
	\caption{\small Some examples of peaked stationary densities generated by piecewise constant $p(x)$ with $k=2$.}\label{peaked}
\end{figure}

Some examples of peaked stationary densities of the form \eqref{ex2_den}   with various choices of $s_1$ and $(k,z,p_1,p_2)= (2,1,0,1)$ were  given in~\cite{Ramli}. That, of course, is a very special case producing a rather limited range of peaked stationary densities (see Fig.~3 in~\cite{Ramli}). In the more general case where the parameter $z$ can assume any positive value, one can create a much richer variety of (arbitrarily high-) peaked stationary densities, see Fig.~\ref{peaked} for some examples of peaked stationary densities generated by piecewise constant $p(x)$ with $k=2$. Moreover, such functions $p(x)$  can also generate bimodal   stationary densities (see Fig.~\ref{bimodal}), while models with $k>2$ can have more general multimodal stationary densities (see Fig.~\ref{multimodal} for examples of multimodal densities corresponding to models with $k=6$).

\begin{figure}[ht]
	\centering
	\subfigure[$p_1=0.7$, $p_2=0.3$, $s_1=0.5$]{\includegraphics[scale=0.51]{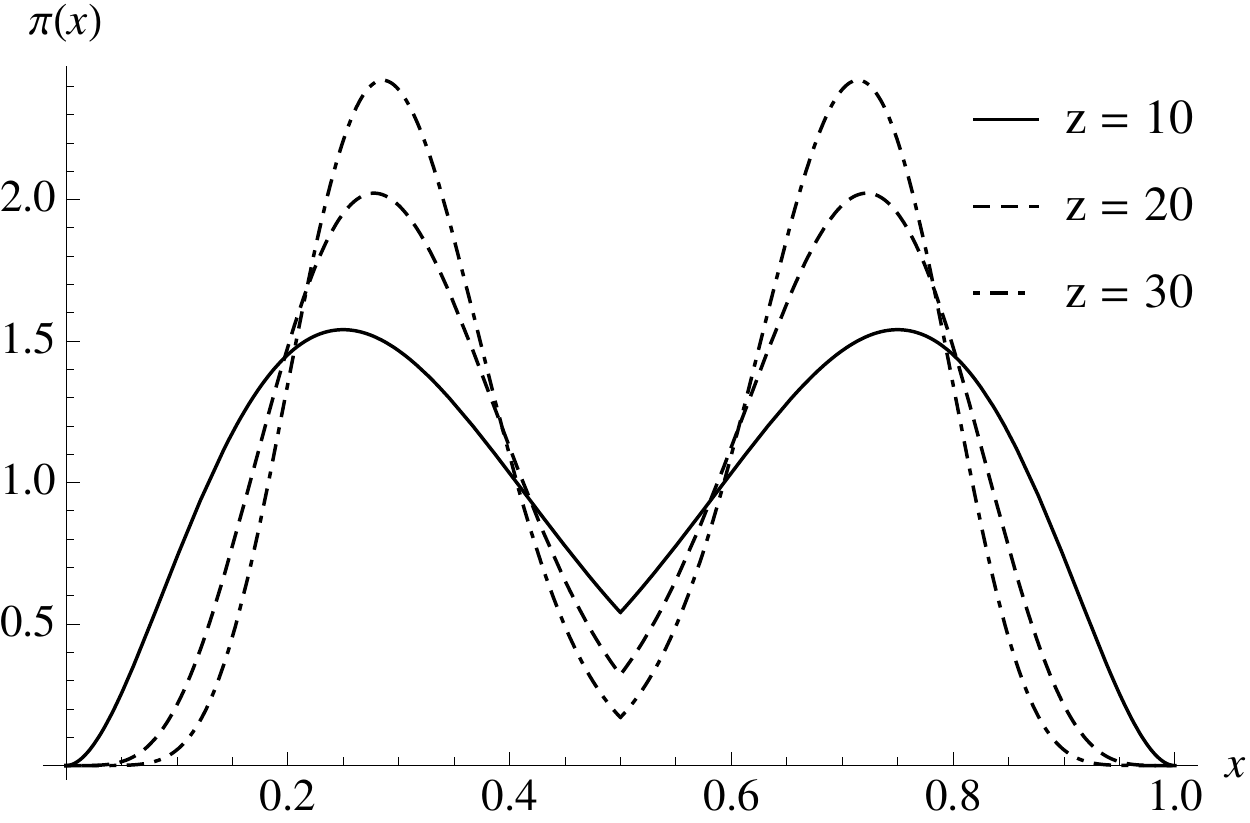}}
	  \hspace{0.3cm}
	\subfigure[$p_1 = 0.7$, $p_2 = 0.3$, $z = 10$]{\includegraphics[scale=0.51]{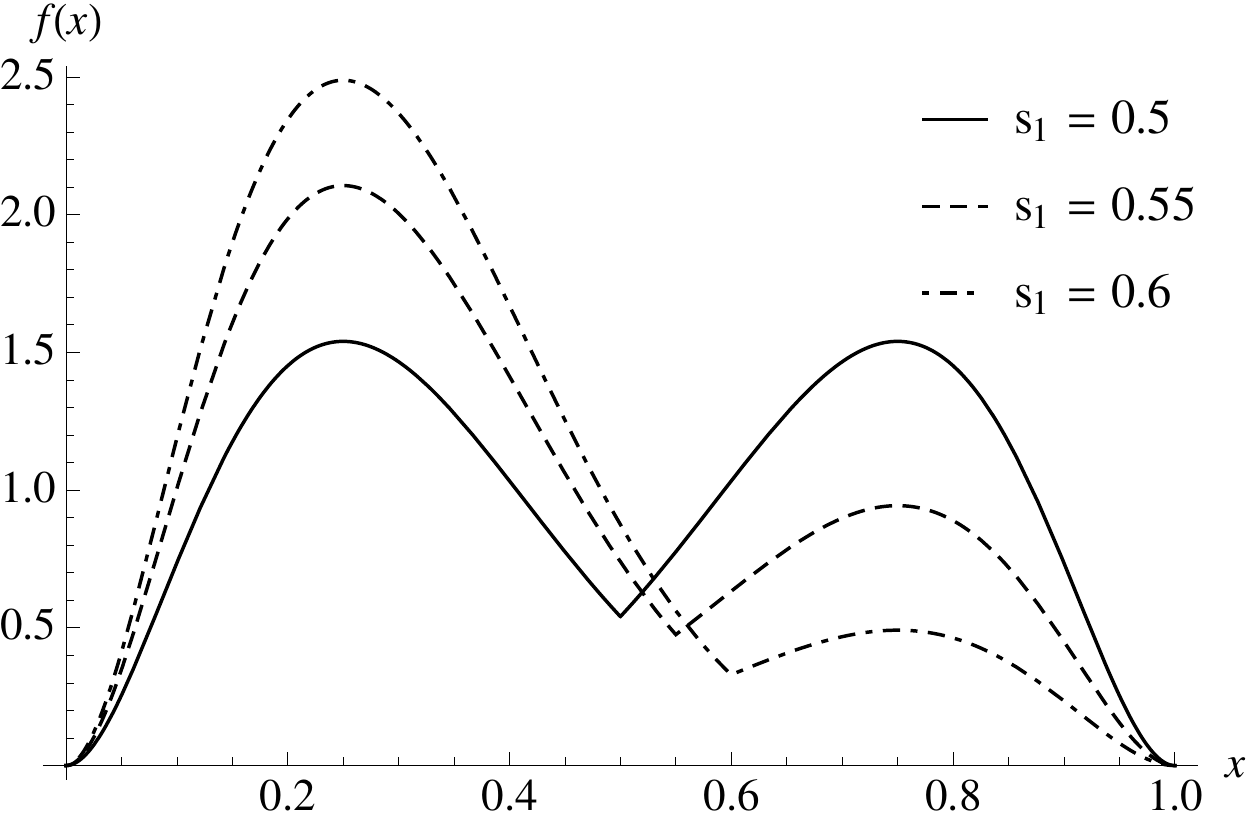}}
	\subfigure[$p_1 = 0.7$, $s_1=0.5$, $z=10$]{\includegraphics[scale=0.51]{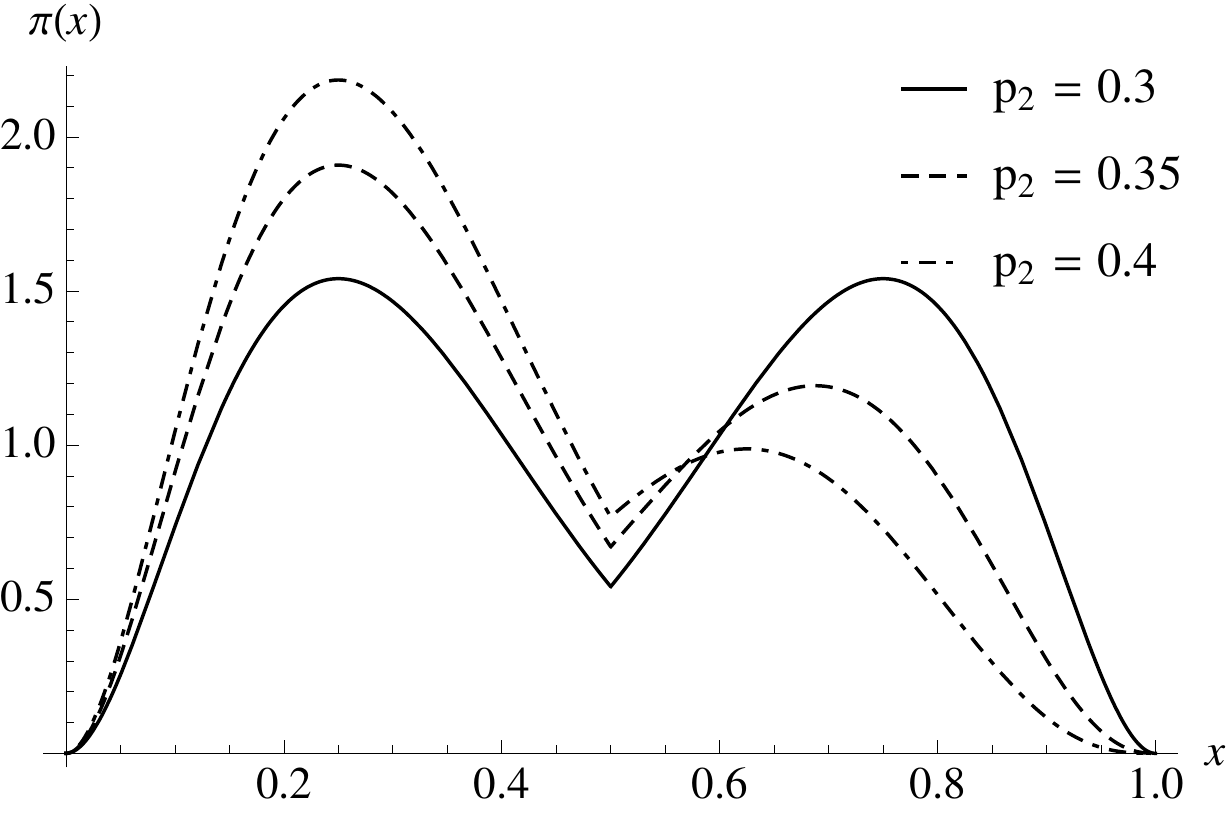}}
	  \hspace{0.3cm}
	\subfigure[$z=10$]{\includegraphics[scale=0.51]{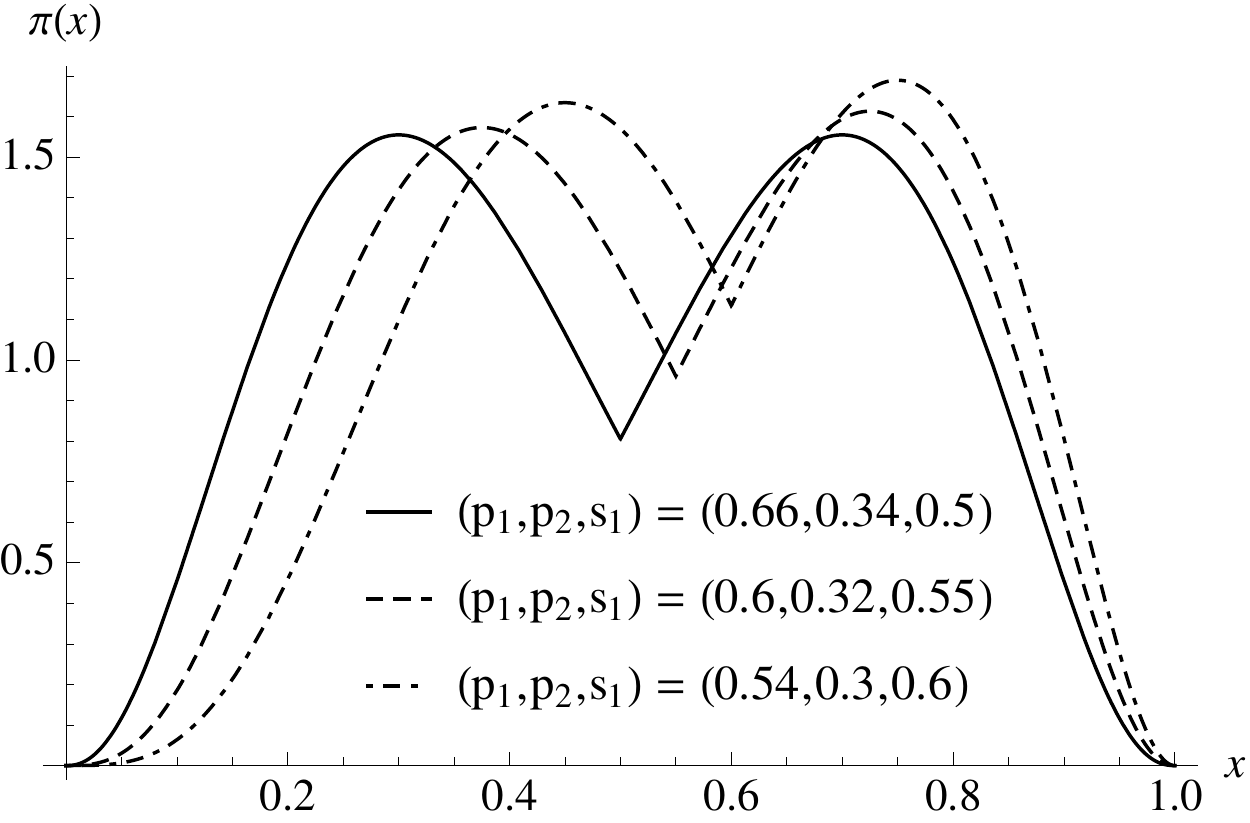}}
	\caption{\small Some examples of bimodal stationary densities generated by piecewise constant $p(x)$ with $k=2$.}\label{bimodal}
\end{figure}

\begin{figure}[ht]
	\centering
	\subfigure[$z=3$, $\vect{p} = (0,1,0,1,0,1)$]{\includegraphics[scale=0.71]{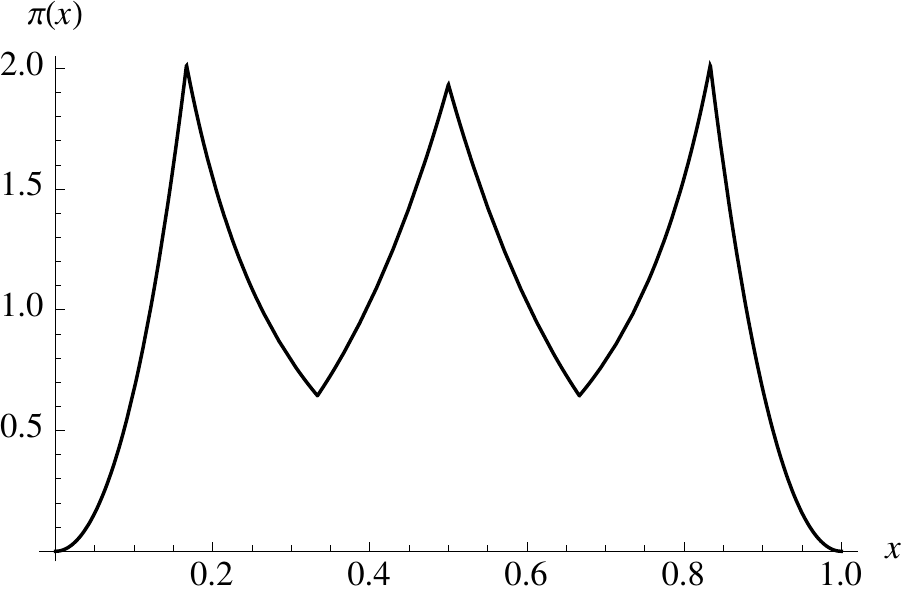}}
	  \hspace{0.3cm}
	\subfigure[$z=5$, $\vect{p} = (0,1,0,1,0,1)$]{\includegraphics[scale=0.71]{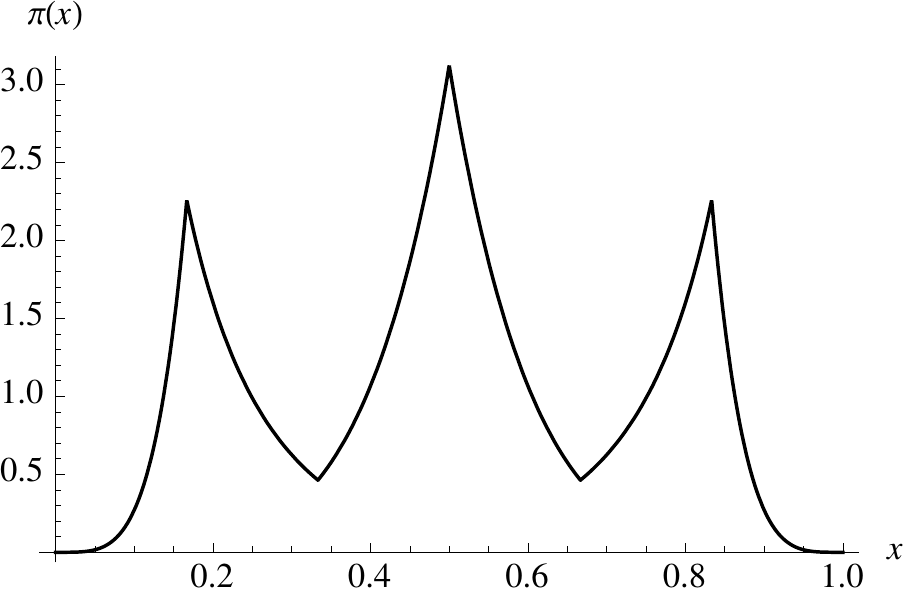}}
	\subfigure[$z=3$, $\vect{p} = (1/5,1,1/5,1,1/5,1)$]{\includegraphics[scale=0.71]{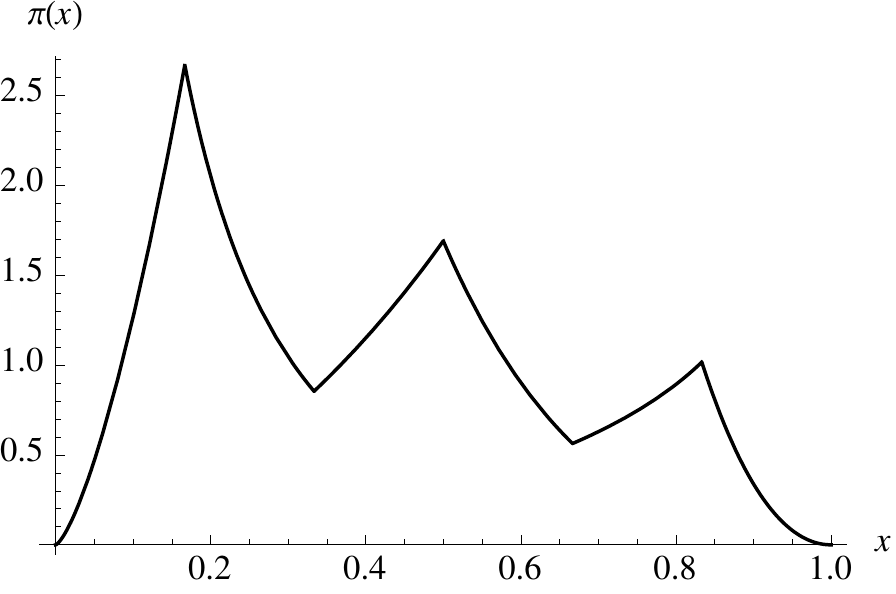}}
	  \hspace{0.3cm}
	\subfigure[$z=3$, $\vect{p} = (1/5,4/5,1/5,4/5,1/5,4/5)$]{\includegraphics[scale=0.71]{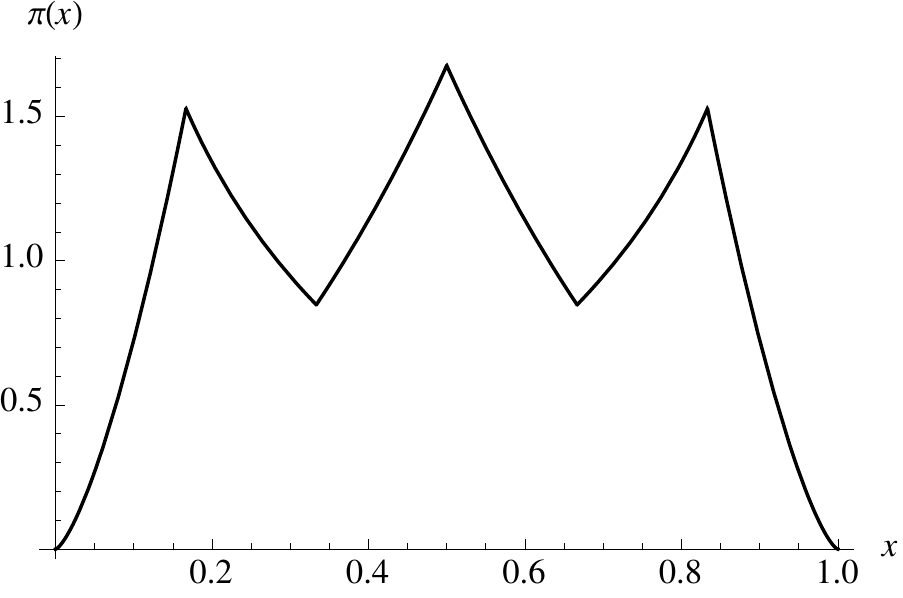}}
	\caption{\small Some examples of trimodal stationary densities generated by piecewise constant $p(x)$, where $k = 6$, $\vect{p} := (p_1, \ldots, p_6)$, $s_1=1/6$, $s_2=1/3$, $s_3=1/2$, $s_4=2/3$, $s_5=5/6$.}\label{multimodal}
\end{figure}


\subsection{A robot coverage algorithm and random search}\label{S_robot}

In this example, we discuss a generalisation of the robot coverage algorithm suggested  in Section~5 of~\cite{Ramli}, where the following scenario was considered.

Suppose a robot is moving periodically in a rectangular room of size $d_1 \times d_2$. At each location the robot stops, a measurement is taken, and then the robot moves to the next location according to some rule. The objective of the rule is to ensure the measurements cover the whole room, with certain areas in the room given higher priority. To achieve that, one can  make the robot move according to a $D:= [0,d_1] \times [0,d_2]$-valued Markov chain $\{(X_{1,n},X_{2,n})\}_{n\ge 0}$, with a given stationary density (that will specify the degree of attention the robot will be paying to different areas of the room).

Given that  the robot is at $(x_1 ,x_2) \in D$ at time $n\ge 0$, at time $n+1$ it  moves to its next  location in $ D$ according to the following algorithm.

\begin{itemize}[leftmargin=1.7cm]
\item[\textbf{Step 1:}] For two given measurable functions $p_i: [0,d_i] \rightarrow [0,1]$, $i   =1,2$, the $i$th component of the displacement vector is negative with probability $p_i(x_i)$ (and positive with probability $1-p_i(x_i)$), $i=1,2$, the signs of the two components being independent of each other.

\item[\textbf{Step 2:}] The distances $\Delta_i $  to be travelled in the $i$th dimensions, $i=1,2,$   are selected at random as follows:
\end{itemize}
\begin{align*}
\Delta_i & := \left \{ \begin{array}{ll}
R_{i,n}(d_i-x_i)  & \text{if moving in positive direction along axis $i$},\\
(1-L_{i,n})x_i   & \text{if moving in negative direction along axis $i$},
\end{array}\right.
\end{align*}
where $L_{i,n} \sim \beta(1,l_i)$, $R_{i,n} \sim \beta(1,r_i)$, $i=1,2$, $n\in \naturals$, are all independent of each other and of the choices made at Step~1.

Clearly, the stationary density on $D$ of the robot location process $\{(X_{1,n},X_{2,n})\}_{n \ge 0}$ is the product of the stationary densities of the component  processes $\{X_{1,n}\}_{n \ge 0}$ and $\{X_{2,n}\}_{n \ge 0}$. The algorithm suggested in \cite{Ramli} used indicator functions $p_i$ and uniformly distributed  $L_{i,n}$, $R_{i,n}$ only, so that the  above version allows one to design much more general ``preference functions" (i.e., stationary densities for the Markov chain describing the robot's movement) for the coverage algorithm.

Now suppose, as it was done in the example in Section~5 of~\cite{Ramli}, that there is a single point of interest at $(y_1, y_2):=(0.2 d_1, 0.5 d_2)$. Setting
\begin{equation}\label{robot_mf}
p_i (x_i) := \indicator_{\{x_i>y_i\}} , \qquad i= 1,2,
\end{equation}
we obtain a stationary density on $D$ with a single peaked mode at the point  $(y_1, y_2)$ when $l_1,r_1,l_2,r_2 \ge 1$. See Fig.~\ref{robot} below for a plot of the stationary density when $l_1=r_1=l_2=r_2=3$ and $d_1=d_2=1$.

\begin{figure}[ht]
	\centering
	\includegraphics[scale=0.20]{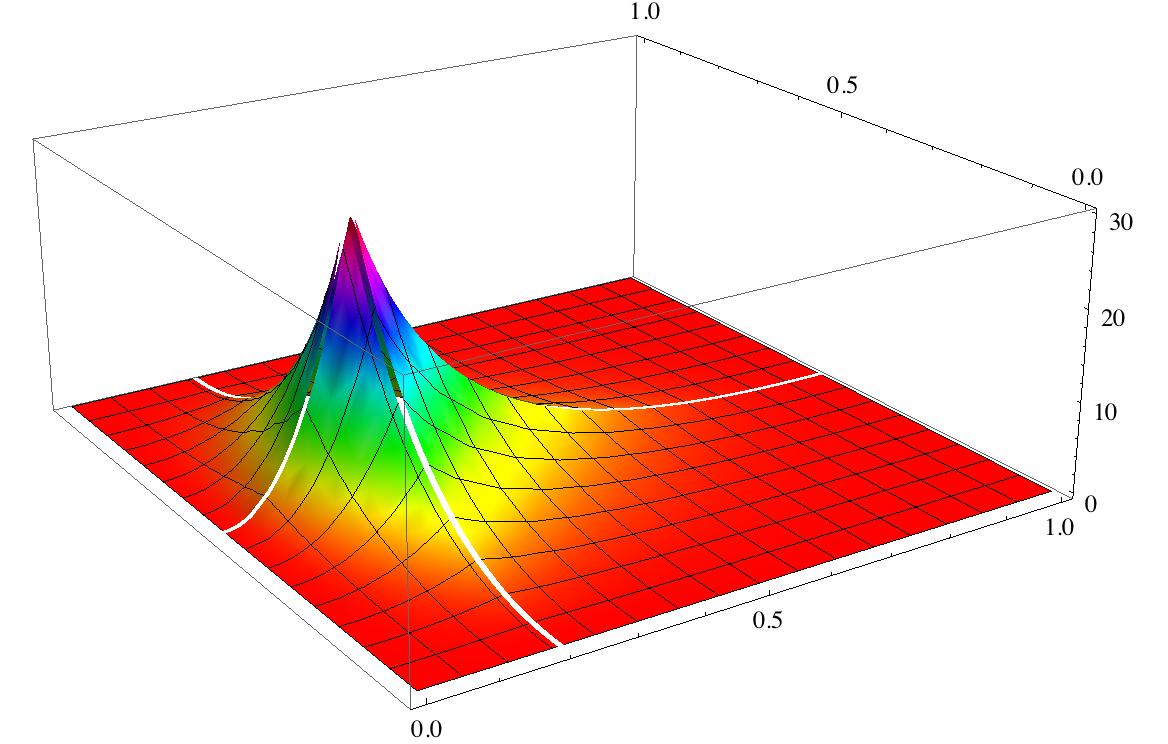}
	\caption{The stationary density of the robot converge algorithm with $p_i$ given by \eqref{robot_mf}, $l_1=r_1=l_2=r_2=3$, and $d_1=d_2=1$.}\label{robot}
\end{figure}

\begin{remark}
To design a robot coverage algorithm for a general bounded connected region  $D\subset \reals^d$, $d\ge 2$, with piecewise smooth boundary (rather than just a rectangle) and an arbitrary given ``preference function", one can use the Markov chain model suggested in \cite{Borovkov94} for simulating random vectors with given densities on such sets. That model can also be used for constructing coverage algorithms on the boundaries of the sets.

\end{remark}

In conclusion   note that the above algorithm can be modified to  adapt to the level of measurements, which will  basically turn it into  a random sequential search algorithm. More specifically, suppose that our robot measures a scalar quantity   $g(\vect{x})$ depending on the location $\vect{x}$ in the search space $D=[0,1]^d$, $d \ge 1$. The goal is to find $\vect{x^*}:=\argmax_{\vect{x} \in D} g(\vect{x})$ of the global maximum  of the objective function $g: D \rightarrow \reals$.  What distinguishes this setup from the usual optimisation problem is that one  now aims to minimise not the amount  of computation  required to find a satisfactory approximation to the maximum point but, rather, the {\em distance traveled by the robot\/} in the process.

In a typical sequential random search algorithm (see  e.g.\ Chapter 1 in \cite{Zabrinski}), given the current  ``best-found position" $\vect{y}$ (with the largest  value of $g$ among all the points of $D$ visited so far), a new candidate point  $\vect{x}$  is generated at random according to a distribution depending on the current position and the ``past search history". If $g(\vect{x}) > g(\vect{y})$ then we move to the new position~$\vect{x}$, otherwise a new candidate point is generated, according to the same distribution (or its modification). It is well understood that it is important to incorporate a ``systematic search-domain reduction into random optimisation" procedure (see e.g.\ \cite{Spaans}).  In   classical implementations of the search procedure where the new candidate point is sampled from the uniform distribution on a sphere or cube centred at $\vect{y}$  (see e.g.\ \cite{Schrack,Luus}), this is achieved, roughly speaking,  by ``shrinking" the size of the respective set (sphere or cube) at an exponential rate. Alternatively, one can try to achieve basically the same effect by changing the shape of the sampling distribution  (akin to changing the ``temperature" in simulated annealing). That can be achieved using our results on the ``peaked shape" of the stationary distribution of Markov chains.

One can construct such an algorithm as follows. Fix  a value  $v \in [0,1/2]$ that will specify our function $p(x)$, and choose  a sequence $\{z_n> 0\}_{n\ge 0}$, $z_n \uparrow \infty$ as $n\to\infty$. 
 
\begin{itemize}[leftmargin=1.7cm]
\item[\textbf{Step 0:}] Initialise algorithm parameters: an initial point $\vect{X}_0 = (X_{0,1}, \ldots, X_{0,d})$ and the iteration index   $n:=0.$ Set $\vect{Y}:=\vect{X}_0.$

\item[\textbf{Step 1:}]   Generate   $\vect{X}_{n+1}$, of which  the components $X_{n+1,j}$ are obtained from the respective values of $X_{n,j}$ according to transitions in $d$ independent  Markov chains of the form \eqref{RDE} with  $ F_L=F_R =\beta(1,z_{n})$, $
p(x) = 1-v + (2v-1) \indicator_{ \{x < Y_{j}\}}$, $j=1,\ldots, d.$

\item[\textbf{Step 2:}] If $ g (\vect{X}_{n+1})> g (\vect{Y}  ) $   then set  $\vect{Y} :=\vect{X}_{n+1} $ to update the best-found point.
    
\item[\textbf{Step 3:}] Set $n:=n+1$ and go to Step~1.

\end{itemize}
The procedure continues until a suitable stopping criterion is satisfied (e.g., the total travel distance reaches a prescribed level etc.).


\vspace{0.3cm}
\noindent \textbf{Acknowledgements.} This research was supported by the ARC Centre of Excellence for Mathematics and Statistics of Complex Systems, the Maurice Belz Trust and ARC Discovery Grant DP120102398. The first author wishes to thank the School of Mathematical Sciences at Queen Mary, University of London, for providing a visiting position while this research was undertaken.


}
\end{document}